\newcolumntype{"}{@{\hskip\tabcolsep\vrule width 1pt\hskip\tabcolsep}}
\definecolor{vividviolet}{rgb}{0.62, 0.0, 1.0}
\newtheoremstyle{de}
  {10pt}          
  {10pt}  
  {\rm}  
  {}
  {\bf}  
  {. }    
  { }    
  {}     
\theoremstyle{de}
\newtheorem{example}{Example}[section]
\newtheoremstyle{theorem}
  {10pt}          
  {10pt}  
  {\it}  
  {}
  {\bf}  
  {. }    
  { }    
  {}     
\theoremstyle{theorem}
\numberwithin{equation}{section}
\newtheorem{theorem}{Theorem}[section]
\newtheorem{lemma}[theorem]{Lemma}
\newtheorem{corollary}[theorem]{Corollary}
\newtheorem{conjecture}{Conjecture}[section]
\numberwithin{equation}{section}
\begin{document}
\baselineskip18truept
\normalsize
\begin{center}
{\mathversion{bold}\Large \bf A note on local antimagic chromatic number of lexicographic product graphs }

\bigskip
Gee-Choon Lau$^{1}$, Wai-Chee Shiu$^{2}$, K. Premalatha$^{3}$, Ruixue Zhang$^{4,}\footnote{Corresponding author (ruixuezhang7@163.com)}$, M. Nalliah$^{5}$\\

\medskip

\emph{{$^a$}Faculty of Computer \& Mathematical Sciences,}\\
\emph{Universiti Teknologi MARA (Johor Branch, Segamat Campus),}\\
\emph{85000, Malaysia.}\\
\emph{geeclau@yahoo.com}\\

\medskip
\emph{{$^b$}Department of Mathematics,}\\
\emph{The Chinese University of Hong Kong,}\\
\emph{Shatin, Hong Kong, China.}\\
\emph{wcshiu@associate.hkbu.edu.hk}\\

\medskip
\emph{{$^c$}National Centre for Advanced Research in Discrete Mathematics,\\Kalasalingam Academy of Research and Education, Krishnan koil, India.}\\
\emph{premalatha.sep26@gmail.com}\\

\medskip
\emph{{$^d$} School of Mathematics and Statistics, \\Qingdao University, Qingdao, 266071, China.}\\
\emph{rx.zhang87@qdu.edu.cn}

\medskip
\emph{{$^e$}Department of Mathematics, School of Advanced Sciences, \\Vellore Institute of Technology, Vellore-632 014, India.} \\
\emph{nalliahklu@gmail.com}

\end{center}

 \begin{abstract}
 \noindent
Let $G = (V,E)$ be a connected simple graph. 
A bijection $f : E \to \{1,2,\ldots,|E|\}$ is called a local antimagic labeling of $G$ if $f^+(u) \ne f^+(v)$ holds for any two adjacent vertices $u$ and $v$, where $f^+(u) = \sum_{e\in E(u)} f(e)$ and $E(u)$ is the set of edges incident to $u$.
A graph $G$ is called local antimagic if $G$ admits at least a local antimagic labeling.
The local antimagic chromatic number, denoted $\chi_{la}(G)$, is the minimum number of induced colors taken over local antimagic labelings of $G$.
Let $G$ and $H$ be two disjoint graphs.
The graph $G[H]$ is obtained by the lexicographic product of $G$ and $H$.
In this paper, we obtain sufficient conditions for $\chi_{la}(G[H])\le \chi_{la}(G)\chi_{la}(H)$.
Consequently, we give
examples of $G$ and $H$ such that $\chi_{la}(G[H]) = \chi(G)\chi(H)$,
where $\chi(G)$ is the chromatic number of $G$.
We conjecture that (i) there are infinitely many graphs $G$ and $H$
such that $\chi_{la}(G[H]) = \chi_{la}(G)\chi_{la}(H) = \chi(G)\chi(H)$, and (ii) for $k\ge 1$, $\chi_{la}(G[H]) = \chi(G)\chi(H)$ if and only if $\chi(G)\chi(H) = 2\chi(H) + \lceil\frac{\chi(H)}{k}\rceil$, where $2k+1$ is the length of a shortest odd cycle in $G$.
 \\[2mm]
 {\bf Keywords:} lexicographic product; regular; local antimagic chromatic number.\\[2mm]
 {\bf 2020 Mathematics Subject Classification:} 05C78, 05C69
 \end{abstract}

\baselineskip=0.20in

\section{Introduction}
Let $G = (V,E)$ be a connected simple graph of order $p$ and size $q$.
A bijection $f : E \to \{1,2,\ldots,q\}$ is called a {\it local antimagic labeling} of $G$ if $f^+(u) \ne f^+(v)$ holds for any two adjacent vertices $u$ and $v$, where $f^+(u) = \sum_{e\in E(u)} f(e)$, and $E(u)$ is the set of edges incident to $u$.
Clearly, a local antimagic labeling induces a proper coloring of $G$.
The function $f$ is called a {\it local antimagic $t$-coloring} of $G$ if $f$ induces a proper $t$-coloring of $G$, and we say $c(f)=t$.
The {\it local antimagic chromatic number} of $G$, denoted by $\chi_{la}(G)$, is the minimum number of $c(f)$, where $f$ takes over all local antimagic labelings of $G$ \cite{Arumugam}. Interested readers may refer to~\cite{LNS, LNS-dmgt, LauShiuNg} for results related to local antimagic chromatic numbers of graphs.

Let $G$ and $H$ be two disjoint graphs.
The {\it lexicographic product} $G[H]$ of graphs $G$ and $H$ is a graph such that
its vertex set is the cartesian product $V(G) \times V(H)$,
and any two vertices $(u,u')$ and $(v,v')$ are adjcent in $G[H]$
if and only if either $uv\in E(G)$ or $u=v$ and $u'v'\in E(H)$.
In~\cite{LauShiu-lexi}, the first two authors studied the exact value of $\chi_{la}(G[O_n])$, where $O_n$ is a null graph of order $n\ge 2$.
Motivated by the above result, we investigate the sharp upper bound of $\chi_{la}(G[H])$
for any two disjoint non-null graphs $G$ and $H$ in this paper.
We present the sufficient conditions for $\chi_{la}(G[H]) \le \chi_{la}(G)\chi_{la}(H)$ holding.
Further, we conjecture that (i) there are infinitely many graphs $G$ and $H$ with  $\chi_{la}(G[H]) = \chi_{la}(G)\chi_{la}(H) = \chi(G)\chi(H)$,
where $\chi(G)$ is the chromatic number of $G$;
and (ii) for any positive integer $k$, $\chi_{la}(G[H]) = \chi(G)\chi(H)$ if and only if $\chi(G)\chi(H) = 2\chi(H) + \lceil\frac{\chi(H)}{k}\rceil$, where $2k+1$ is the length of the shortest odd cycle in $G$.
We refer to \cite{Bondy} for all undefined notation.

\section{Bounds of $\chi_{la}(G[H])$}\label{sec-lexi}

Before presenting our main results, we introduce some necessary notation and known results which will be used in this section.

Let $[a,b]=\{n\in\mathbb{Z}\;|\; a\le n\le b\}$ and $S\subseteq \mathbb{Z}$.
Let $S^-$ and $S^+$ be a decreasing sequence and an increasing sequence of $S$, respectively.

\begin{lemma}[{\cite[Lemma~2.2]{LauShiu2022-0}}]\label{lem-S}
For positive integers $q$ and $p$, let $S_p(a)=[p(a-1)+1, pa]$, $1\le a\le q$. Then,
\begin{enumerate}[(i)]
\item $\{S_p(a)\;|\; 1\le a\le q\}$ is a partition of $[1, pq]$;
\item when $a<b$, every term of $S_p(a)$ is less than that of $S_p(b)$;
\item for each $1\le i\le p$,
the sum of the $i$-th term of $S_p^+(a)$ and that of $S_p^-(b)$ is independent of
$i$, where $1\le a, b\le q$;
\item for any positive integer $k$ and each $1\leq i\leq p$, $\sum\limits_{l=1}^k \left(i\mbox{-th term of }S_p^+(a_l)\right) + \sum\limits_{l=1}^k \left(i\mbox{-th term of } S_p^-(b_l)\right)$ is independent of 
$i$, where $1\le a_l, b_l\le q$.
\end{enumerate}
\end{lemma}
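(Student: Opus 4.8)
The plan is to dispatch parts (i) and (ii) by direct inspection of the intervals, and to reduce parts (iii) and (iv) to a one-line cancellation once closed forms for the $i$-th terms of the monotone sequences are in hand.

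For (i) and (ii), I would first note that each $S_p(a)=[p(a-1)+1,pa]$ consists of exactly $p$ consecutive integers, and that the largest element $pa$ of $S_p(a)$ is immediately followed by the smallest element $pa+1$ of $S_p(a+1)$. Hence the $q$ blocks $S_p(1),\dots,S_p(q)$ are pairwise disjoint and their union is $[1,pq]$, which gives (i). For (ii), if $a<b$ then $a\le b-1$, so the largest element $pa$ of $S_p(a)$ satisfies $pa\le p(b-1)<p(b-1)+1$, and $p(b-1)+1$ is the smallest element of $S_p(b)$; therefore every term of $S_p(a)$ is strictly less than every term of $S_p(b)$.

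For (iii) and (iv), I would record the explicit descriptions: the increasing sequence $S_p^+(a)$ is $p(a-1)+1,\,p(a-1)+2,\,\dots,\,pa$, so its $i$-th term is $p(a-1)+i$; the decreasing sequence $S_p^-(b)$ is $pb,\,pb-1,\,\dots,\,p(b-1)+1$, so its $i$-th term is $pb-i+1$. Adding these, the sum in (iii) equals $p(a-1)+i+pb-i+1=p(a+b-1)+1$, which is independent of $i$. Statement (iv) then follows by summing this identity over $l=1,\dots,k$: the $+i$ contributed by the $i$-th term of $S_p^+(a_l)$ cancels the $-i$ contributed by the $i$-th term of $S_p^-(b_l)$, so the total equals $\sum_{l=1}^{k}\big(p(a_l+b_l-1)+1\big)$, again free of $i$.

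The argument is entirely elementary, so there is no genuine obstacle; the only point that needs a moment's care is the indexing of the decreasing sequence $S_p^-(b)$ — one must use that its $i$-th term is $pb-i+1$ (and not $pb-i$), since it is precisely this normalization that makes the $i$-dependence cancel in (iii) and (iv). It is also worth proving (i) and (ii) before (iii) and (iv), because part (ii) is what certifies that the $S_p^+(a)$ and $S_p^-(b)$ are bona fide monotone rearrangements of pairwise disjoint blocks, which is the form in which the lemma is invoked later.
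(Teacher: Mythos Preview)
Your proof is correct and matches the paper's approach: the paper does not reprove this cited lemma but records only that the sum in (iii) equals $p(a+b-1)+1$, which is exactly the value you compute from the explicit $i$-th terms $p(a-1)+i$ and $pb-i+1$. Nothing further is needed.
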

 Note that the proof of Lemma \ref{lem-S} in\cite{LauShiu2022-0} shows that the sum of $i$-term of $S_p^+(a)$ and that of $S_p^-(b)$ is $p(a+b-1)+1$.
 According to the definitions of $S_p^+(a)$ and $S_p^-(a)$,
 we shall write the sequence $S_p^+(a)$ and $S_p^-(a)$ as column vectors in this paper.
 Now we are ready to present our first main result.

\begin{theorem}\label{thm-pH} Suppose $H$ admits a local antimagic $t$-coloring $f$ that satisfies the following conditions:
\begin{enumerate}[(a)]
\item for each vertex, the number of even incident edge labels equals the number of odd incident edge labels under $f$;
\item when $f^+(u) = f^+(v)$, $\deg(u) = \deg(v)$;
\item when $f^+(u)\ne f^+(v)$, $pf^+(u)-\frac{1}{2}\deg(u)(p-1) \ne pf^+(v)-\frac{1}{2}\deg(v)(p-1)$ holds for a fixed integer $p$.
\end{enumerate}
Then $\chi_{la}(pH)\le t$.
\end{theorem}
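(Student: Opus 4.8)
The plan is to construct, directly from $f$, a local antimagic labeling $g$ of $pH$ in which the $f$-color of a vertex of $H$ completely determines its $g$-color in every copy. Write $q=|E(H)|$ and index the $p$ disjoint copies of $H$ inside $pH$ by $i\in[1,p]$. By Lemma~\ref{lem-S}(i) the blocks $S_p(a)=[p(a-1)+1,pa]$, $1\le a\le q$, partition the label set $[1,pq]$ that $g$ must use. For an edge $e$ of $H$ put $a=f(e)$; the $p$ copies of $e$ (one in each layer) will jointly receive the $p$ integers of $S_p(a)$, and the crucial point is the \emph{order}: if $a$ is even, the copy of $e$ in layer $i$ gets the $i$-th term of the increasing sequence $S_p^+(a)$, i.e.\ $p(a-1)+i$; if $a$ is odd, it gets the $i$-th term of the decreasing sequence $S_p^-(a)$, i.e.\ $pa-i+1$. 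Since each block $S_p(a)$ is used exactly once across the layers and the blocks partition $[1,pq]$, the resulting map $g:E(pH)\to[1,pq]$ is a bijection.

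Next I would compute the induced vertex sums. Fix a vertex $u$ of $H$ and consider its copy in layer $i$. Splitting $E(u)$ into the edges with even $f$-label and those with odd $f$-label and substituting the two formulas above, one gets
\[
g^+(u)=p\,f^+(u)+m_e(u)\,(i-p)+m_o(u)\,(1-i),
\]
where $m_e(u)$ and $m_o(u)$ count the even, respectively odd, incident $f$-labels at $u$. Hypothesis (a) says precisely that $m_e(u)=m_o(u)=\frac{1}{2}\deg(u)$, so the $i$-dependent terms cancel and
\[
g^+(u)=p\,f^+(u)-\frac{1}{2}\deg(u)(p-1),
\]
which is independent of the layer $i$ and is exactly the quantity appearing in hypothesis (c).

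It then remains to verify that $g$ is a local antimagic $t$-coloring. Two vertices of $pH$ can be adjacent only when they lie in the same layer and are adjacent as vertices of $H$; for such $u,v$ we have $f^+(u)\ne f^+(v)$ because $f$ is a local antimagic labeling of $H$, and hence $g^+(u)\ne g^+(v)$ by hypothesis (c). Thus $g$ is a local antimagic labeling of $pH$. Finally, the set of colors realized by $g$ is $\{\,p\,f^+(u)-\frac{1}{2}\deg(u)(p-1):u\in V(H)\,\}$, the same in every layer; by hypothesis (b), whenever $f^+(u)=f^+(v)$ we also have $\deg(u)=\deg(v)$ and therefore $g^+(u)=g^+(v)$, so $g$ uses no more colors than $f$, namely at most $t$. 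Hence $\chi_{la}(pH)\le c(g)\le t$.

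The only real idea is recognizing that condition (a) is exactly the parity balance needed for the layer index $i$ to drop out of $g^+(u)$; once that is seen, (b) is tailor-made to bound the number of colors and (c) is tailor-made to keep the coloring proper, while the rest is the bookkeeping provided by Lemma~\ref{lem-S}. The points requiring a little care are the consistency of the even/odd rule at both endpoints of an edge — which holds because the parity of $f(e)$ is attached to the edge, not to a vertex — and the degenerate case $p=1$, where $S_p^+(a)=S_p^-(a)=(a)$, $g=f$, and conditions (b) and (c) hold trivially.
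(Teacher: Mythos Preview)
Your argument is correct and follows the same route as the paper: partition $[1,pq]$ into blocks $S_p(a)$, assign the block $S_p(f(e))$ to the $p$ copies of each edge $e$, and order the block increasingly or decreasingly according to the parity of $f(e)$ so that, by condition~(a), the layer index cancels and every copy of $u$ receives the weight $pf^+(u)-\tfrac12\deg(u)(p-1)$; conditions~(b) and~(c) then finish exactly as you wrote. The only cosmetic difference is that the paper uses the increasing order for odd labels and the decreasing order for even labels (the opposite of your convention) and phrases everything via labeling matrices and a guide matrix, but the computation and the conclusion are identical.
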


\begin{proof} Let $V(H)=\{x_1, \ldots, x_n\}$ and
$L$ be the labeling matrix of $H$ according to $f$ (for definition of labeling matrix, please see \cite{Shiu1998}). Now we define a guide matrix $\mathcal M$ by adding a `$+$' sign to all odd entries and a `$-$' sign to all even entries in $L$. The concept of guide matrix was introduced in \cite{LauShiu2022-0}.

We define $p$ matrices $L_1, \dots, L_p$ as follows.
For each $1\leq i\leq p$, the $(j,k)$-entry of $L_i$ is the $i$-th term of $S_p^+(a)$ (resp. $S_p^-(a)$) if the corresponding $(j,k)$-entry of $\mathcal M$ is $+a$ (resp. $-a$), where $1\le a\le |E(H)|$.

From the condition (a),
for each row of $L$, the number of odd entries equals that of even entries.
Thus, let $a_1, \dots, a_s$ denote the odd numerical entries of the $j$-th row of $L$ and
$b_1, \dots, b_s$ denote the even numerical entries of the $j$-th row of $L$, where $s$ is a positive integer.
Now,
\begin{align*}
r_j(L_i) & = \sum_{l=1}^s [\mbox{$i$-th term of }S_p^+(a_l)]+\sum_{l=1}^s [\mbox{$i$-th term of }S_p^-(b_l)].
\end{align*}

 By Lemma~\ref{lem-S} (iv), $r_j(L_i)$ is constant for a fixed $j$. Actually, it is $p\sum\limits_{l=1}^s (a_l+b_l)-ps+s =pr_j(L) -k(p-1)=pf^+(x_j)-\frac{1}{2}\deg(x_j)(p-1)$. By conditions (a) and (b), the diagonal block matrix \[\begin{pmatrix}L_1 & \bigstar & \cdots & \bigstar\\
\bigstar & L_2 & \cdots & \bigstar\\
\vdots & \vdots & \ddots & \vdots\\
\bigstar & \bigstar & \cdots & L_p\end{pmatrix}\]
 is a local antimagic labeling of $pH$. Thus $\chi_{la}(pH)\le t$.
\end{proof}
 It is known that $\chi_{la}(K_{1,2n})=2n+1$ and $\chi_{la}(mK_{1,2n})=2nm+1$ \cite[Corollary~3]{Baca}. Clearly, the upper bound stated in Theorem~\ref{thm-pH} is not sharp.
From Theorem \ref{thm-pH}, we obtain the following result immediately.

\begin{corollary}\label{cor-pH} If $H$ is an $r$-regular graph $(r\geq 2)$ with a local
antimagic $t$-coloring $f$ satisfying the condition (a) of Theorem \ref{thm-pH}, then $\chi_{la}(pH)\le t$ holds for any positive integer $p$.
\end{corollary}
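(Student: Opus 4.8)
The plan is to derive Corollary~\ref{cor-pH} as a direct specialization of Theorem~\ref{thm-pH}. Since $H$ is $r$-regular with $r\ge 2$, every vertex has the same degree $r$, so conditions (b) and (c) of Theorem~\ref{thm-pH} become almost automatic; the only genuine hypothesis we need to impose is condition (a), which is exactly what the corollary assumes. So the work is to verify (b) and (c) for free and then invoke the theorem.

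First I would check condition (b): if $f^+(u) = f^+(v)$ then $\deg(u) = \deg(v)$. This is immediate because $H$ is $r$-regular, so $\deg(u) = \deg(v) = r$ for \emph{every} pair of vertices, regardless of their $f^+$-values. Next I would check condition (c): when $f^+(u)\ne f^+(v)$, we must show $pf^+(u)-\frac{1}{2}\deg(u)(p-1) \ne pf^+(v)-\frac{1}{2}\deg(v)(p-1)$ for the fixed integer $p$. Again using $\deg(u)=\deg(v)=r$, the two sides are $pf^+(u)-\frac{1}{2}r(p-1)$ and $pf^+(v)-\frac{1}{2}r(p-1)$; these differ precisely when $pf^+(u)\ne pf^+(v)$, i.e.\ when $f^+(u)\ne f^+(v)$, which holds by assumption. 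Hence (c) holds for every positive integer $p$.

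Having verified that all three conditions of Theorem~\ref{thm-pH} are met (with (a) given, and (b), (c) following from regularity) for an arbitrary positive integer $p$, I would conclude $\chi_{la}(pH)\le t$ for every positive integer $p$, which is the statement of the corollary.

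There is essentially no obstacle here: the content is entirely contained in Theorem~\ref{thm-pH}, and the corollary just records the pleasant fact that for regular graphs the two ``technical'' side conditions (b) and (c) collapse, leaving only the parity condition (a) as a real requirement. The only point deserving a word of care is that condition (c) must hold for the \emph{given} $p$, and we want the conclusion for \emph{all} $p$; but since the verification of (b) and (c) above used regularity alone and worked uniformly in $p$, applying Theorem~\ref{thm-pH} once for each $p$ gives the uniform conclusion.
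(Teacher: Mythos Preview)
Your proposal is correct and matches the paper's approach exactly: the paper states the corollary follows immediately from Theorem~\ref{thm-pH}, and your argument spells out precisely why---regularity renders conditions (b) and (c) automatic for every positive integer $p$, leaving only the assumed condition (a).
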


\begin{theorem}\label{thm-G[H]}
Let $G$ be a graph of order $p$ admitting a local antimagic $\chi_{la}(G)$-coloring $g$ and $H$ be a graph of order $n$ admitting a local antimagic $\chi_{la}(H)$-coloring $h$. Suppose $h$ satisfies the following conditions:
\begin{enumerate}[(i)]
\item For each vertex, the number of even incident edge labels equals the number of odd incident edge labels under $h$;
\item when $h^+(u) = h^+(v)$, $\deg_H(u) = \deg_H(v)$;
\item when $h^+(u)\ne h^+(v)$, $ph^+(u)-\frac{1}{2}\deg_H(u)(p-1) \ne ph^+(v)-\frac{1}{2}\deg_H(v)(p-1)$.
\end{enumerate}
Moreover, $g$ satisfies the following conditions:
\begin{enumerate}[(i)]
\addtocounter{enumi}{3}
\item when $g^+(u) = g^+(v)$, $\deg_G(u) = \deg_G(v)$, and
\item when $g^+(u)\ne g^+(v)$, $g^+(u)n^3 -\frac{(n^3-n)\deg_G(u)}{2} \ne g^+(v)n^3 -\frac{(n^3-n)\deg_G(v)}{2}$.
\end{enumerate}
Then $\chi_{la}(G[H])\leq \chi_{la}(G)\chi_{la}(H)$.
\end{theorem}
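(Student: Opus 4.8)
The plan is to build an explicit local antimagic labeling of $G[H]$ from $g$ and $h$ using the block-matrix technique of Theorem~\ref{thm-pH}, but now applied at two scales. Write $V(G)=\{v_1,\dots,v_p\}$ and $V(H)=\{x_1,\dots,x_n\}$. The vertex set of $G[H]$ is partitioned into $p$ copies $H_1,\dots,H_p$ of $H$ (the fibres), and between two fibres $H_i$ and $H_j$ with $v_iv_j\in E(G)$ there is a complete bipartite graph $K_{n,n}$. The edges of $G[H]$ thus split into the \emph{inner} edges (those inside some fibre $H_i$, of which there are $p|E(H)|$) and the \emph{outer} edges (the $K_{n,n}$'s, of which there are $n^2|E(G)|$). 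I would label the outer edges with the large labels $\{p|E(H)|+1,\dots,p|E(H)|+n^2|E(G)|\}$ and the inner edges with $\{1,\dots,p|E(H)|\}$.

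First I would handle the inner edges exactly as in Theorem~\ref{thm-pH}: since $h$ satisfies conditions (i)--(iii) (with $\deg_H$ in place of $\deg$), taking $S_n(a)$-type sequences — here of length $p$ — and the guide matrix of $h$ produces labelings $L_1,\dots,L_p$ of the $p$ fibres such that the contribution to $h^+$-type sums from inner edges at the vertex of $H_i$ corresponding to $x_j$ is the constant $p\,h^+(x_j)-\tfrac12\deg_H(x_j)(p-1)$, independent of $i$. Call this quantity $\beta_j$; by conditions (ii)--(iii) the map $j\mapsto\beta_j$ is a proper coloring of $H$ taking exactly $\chi_{la}(H)$ values. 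Next I would distribute the outer labels by an analogous construction one level up: use Lemma~\ref{lem-S} with blocks $S_{n^2}(a)$ of length $n^2$ indexed by $a=1,\dots,|E(G)|$, guided by the guide matrix of $g$, so that each $K_{n,n}$ sitting on edge $v_iv_j$ of $G$ receives an $n\times n$ block of labels whose row sums (and column sums) are controlled. Concretely, for the $K_{n,n}$ on edge $v_iv_j$ of $G$, if the $(i,j)$-entry of $g$'s guide matrix is $+a$ we place the sequence $S_{n^2}^+(a)$ into the $n\times n$ block, and if it is $-a$ we place $S_{n^2}^-(a)$; laying the $n^2$ entries out so that every row of the block has the same sum forces the outer-edge contribution at each vertex of $H_i$ to depend only on $i$ (not on which vertex of the fibre), and a computation using Lemma~\ref{lem-S}(iv) shows this common value is $g^+(v_i)\,n^3-\tfrac12(n^3-n)\deg_G(v_i)$ up to an additive constant coming from the shift of the outer labels. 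Call this $\gamma_i$; by conditions (iv)--(v) the map $i\mapsto\gamma_i$ is a proper coloring of $G$ with exactly $\chi_{la}(G)$ values.

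The induced vertex sum at the vertex of $G[H]$ corresponding to $(v_i,x_j)$ is then $\gamma_i+\beta_j$ (plus a global additive constant absorbed harmlessly). It remains to verify this is a proper coloring of $G[H]$ and to count colors. For adjacency: if $(v_i,x_j)\sim(v_{i'},x_{j'})$ with $i\ne i'$ then $v_iv_{i'}\in E(G)$, so $\gamma_i\ne\gamma_{i'}$; I need $\gamma_i+\beta_j\ne\gamma_{i'}+\beta_{j'}$, which is where the coarse scale must dominate — the point of the factor $n^3$ in condition (v) is precisely that the range of the $\beta_j$'s is $O(n\cdot|E(H)|)=O(n^3)$ while distinct $\gamma$-values differ by a multiple of something of size $\ge n^3$, so the $\gamma$-part cannot be cancelled by any difference of $\beta$-parts; if instead $i=i'$ then $x_jx_{j'}\in E(H)$, so $\beta_j\ne\beta_{j'}$ and hence $\gamma_i+\beta_j\ne\gamma_i+\beta_{j'}$. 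Finally the number of distinct values of $\gamma_i+\beta_j$ is at most (number of $\gamma$-values)$\times$(number of $\beta$-values)$=\chi_{la}(G)\chi_{la}(H)$, giving $\chi_{la}(G[H])\le\chi_{la}(G)\chi_{la}(H)$.

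The main obstacle I anticipate is pinning down the arithmetic of the outer block in step two: I must choose the precise arrangement of $S_{n^2}^{\pm}(a)$ inside each $n\times n$ bipartite block so that \emph{both} the row sums are constant across the block (giving independence from $j$ within fibre $H_i$) \emph{and} the sum over all blocks incident to a fixed fibre telescopes, via Lemma~\ref{lem-S}(iii)--(iv), into exactly $g^+(v_i)n^3-\tfrac12(n^3-n)\deg_G(v_i)$ — this is the analogue, at the $K_{n,n}$ level, of the row-sum computation $p\,r_j(L)-k(p-1)$ in the proof of Theorem~\ref{thm-pH}, but carried out with a $2$-dimensional block rather than a single row, and it is also where condition (i) on $h$ (balance of even/odd labels at each vertex, hence $s$-and-$s$ splits making Lemma~\ref{lem-S}(iv) applicable) is silently needed. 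Once the two scales are separated cleanly — the outer labels living in a range where their mutual differences swamp the entire spread of the inner contribution — the properness check and the color count are routine, and condition (v) is exactly the inequality guaranteeing that separation.
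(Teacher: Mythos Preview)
Your overall architecture --- split $E(G[H])$ into inner edges (the $p$ copies of $H$) labeled via Theorem~\ref{thm-pH}, and outer edges (the $K_{n,n}$'s over edges of $G$) labeled by blocks of size $n^2$, then add --- matches the paper exactly. The treatment of the inner part is correct and identical to the paper's.

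The outer part, however, is where your proposal goes wrong. You propose to label the $K_{n,n}$ blocks by applying the guide-matrix machinery to $g$: mark entries of $M_g$ with $+$ or $-$ according to parity and feed in $S_{n^2}^{+}(a)$ or $S_{n^2}^{-}(a)$. But the whole point of that trick in Theorem~\ref{thm-pH} is that Lemma~\ref{lem-S}(iv) only produces an index-independent sum when the number of $+$'s equals the number of $-$'s in each row --- that is exactly why condition~(i) on $h$ is there. No such parity-balance condition is assumed on $g$ (only conditions (iv) and (v) are), so your outer construction does not go through. Your remark that ``condition (i) on $h$ \dots\ is silently needed'' for the outer step is mistaken: condition (i) concerns $h$, not $g$, and is used only for the inner labeling.

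The paper sidesteps this entirely by using a \emph{magic square} $\Omega$ of order $n$: the block sitting on an edge $e$ of $G$ with $g(e)=a$ is $\Omega_a=\Omega+(a-1)n^2J_n$ (transposed on the lower triangle). Each such block already has constant row and column sums $\tfrac{n(n^2+1)}{2}+(a-1)n^3$, so summing over the edges of $G$ incident to $u_l$ gives
\[
\sum_{e\ni u_l}\Big(\tfrac{n(n^2+1)}{2}+(g(e)-1)n^3\Big)=g^+(u_l)\,n^3-\tfrac{(n^3-n)\deg_G(u_l)}{2}
\]
with no appeal to Lemma~\ref{lem-S} and no parity hypothesis on $g$. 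This is why the theorem needs only conditions (iv)--(v) on $g$.

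A second issue: your properness check for cross-fibre adjacency is not valid. Condition~(v) says only that distinct $\gamma$-values are \emph{unequal}; it does not say they differ by at least $n^3$, and indeed when $\deg_G(u_l)\ne\deg_G(u_{l'})$ the difference $\gamma_l-\gamma_{l'}$ can be arbitrarily small compared with~$n^3$. Separately, the $\beta_j$'s carry a factor of $p$, not $n$, so ``the range of the $\beta_j$'s is $O(n^3)$'' is also not what you get. The paper simply asserts that $\mathscr M_1+\mathscr M_2$ is a local antimagic labeling and does not spell out a separation argument either; if you want a rigorous properness check here you will need a different argument than scale comparison.
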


\begin{proof}
Let $q(G)$ and $q(H)$ denote the sizes of $G$ and $H$ respectively.
Clearly, $G[H]$ is a graph of order $pn$ and size $pq(H)+q(G)n^2$.
Suppose that $\{u_1, \ldots, u_p\}$ and $\{x_1, \ldots, x_n\}$ are the vertex lists of $G$ and $H$ respectively.
According to these vertex lists, we define that $A_G$ and $A_H$ are the adjacency matrices of $G$ and $H$ respectively.
Thus the adjacency matrix of $G[H]$ can be expressed as
\[A_G\otimes J_n+I_p\otimes A_H,\]
where $J_n$ is an $n\times n$ matrix whose entries are all $1$,
$I_p$ is an identity matrix of order $p$,
and $A_G\otimes J_n$ is the Kronecker product of $A_G$ and $J_n$.
Note that $A_G\otimes J_n$ is the adjacency matrix of $G[O_n]$
and $I_p\otimes A_H$ is the adjacency matrix of $O_p[H]$,
where $O_n$ and $O_p$ are null graphs of orders $n$ and $p$.
Therefore, the diagonal blocks of $A_G\otimes J_n$ are zero matrices
and only the diagonal blocks of $I_p\otimes A_H$ are non-zero matrices.

Now we shall label the edges of $O_p[H]$ and $G[O_n]$ separately.
According to the definition, $O_p[H]\cong pH$.
Since $H\in \mathcal{H}$, by Theorem \ref{thm-pH},
$pH$ has a local antimagic $\chi_{la}(H)$-coloring, say $\phi$, by using integers in
$[1, pq(H)]$ such that for each vertex $(u_i, x_j)$ in $O_p[H]$,
$\phi^+(u_i, x_j)$ is independent of $i$, where $1\leq i\leq p$.
The labeling matrix of $\phi$ is denoted by $\mathscr M_1$.

Next we shall label $G[O_n]$ by integers in $[1, q(G)n^2]$.
This labeling was constructed in the proof of \cite[Theorem~2.1]{LauShiu-lexi}.
For completeness, we list the outline of the construction.

Let $M_g$ be the labeling matrix of $G$ corresponding to $g$.
Suppose $\Omega$ is a magic square of order $n$.
Let $\Omega_i=\Omega+(i-1)n^2J_n$, where $1\le i\le q(G)$
and $\psi_0$ be the labeling of $G[O_n]$ such that its labeling matrix $\mathscr M$ is defined by replacing each entry of $M_G$ with an $n\times n$ matrix as follows:
\begin{enumerate}[(1)]
\item replace $*$ by $\bigstar$ which is an $n \times n$ matrix whose entries are $*$;
\item replace $i$ by $\Omega_i$, if $i$ lies in the upper triangular part of $M_g$;
\item replace $i$ by $\Omega_i^T$, if $i$ lies in the lower triangular part of $M_g$,
where $\Omega_i^T$ is the transpose of $\Omega_i$.
\end{enumerate}

For each vertex $(u_l,x_j)\in V(G[O_n])$,
the row sum of $\mathscr M_1$ corresponding to the vertex $(u_l, x_j)$ is
$\psi_0^+(u_l, x_j)=g^+(u_l)n^3 -\frac{(n^3-n)\deg_G(u_l)}{2}$,
which is independent of $j$.
By condition (i),
$\psi_0$ is a local antimagic labeling of $G[O_n]$.
According to condition (v),
there are at most $\chi(G)$ distinct row sums of $\mathscr M$.
Let $\mathscr M_2$ be the matrix obtained from $\mathscr M$ by adding all numerical entries with $pq(H)$
and $\psi$ be the corresponding labeling.
Then, $\psi^+(u_l, x_j)=\psi_0^+(u_l, x_j)+npq(H)$,
which is independent of $j$.

Therefore, $\mathscr M_1+\mathscr M_2$ is a labeling matrix that corresponds to a local antimagic labeling of $G[H]$, where $*$ is treated as 0.
Hence $\chi_{la}(G[H])\le \chi_{la}(G)\chi_{la}(H)$.
\end{proof}

The following is an example of Theorem \ref{thm-G[H]}.

\begin{example}\label{eg-OUC4[OUC3]}
Let $G$ be the one point union of two $4$-cycles and $H$ be the one point union of two $3$-cycles. Figure \ref{fig:Ex} show the local antimagic $3$-colorings of $G$ and $H$.

\begin{figure}[H]
    \centering
    \subfloat[Graph $G$]{{\includegraphics[width=5cm]{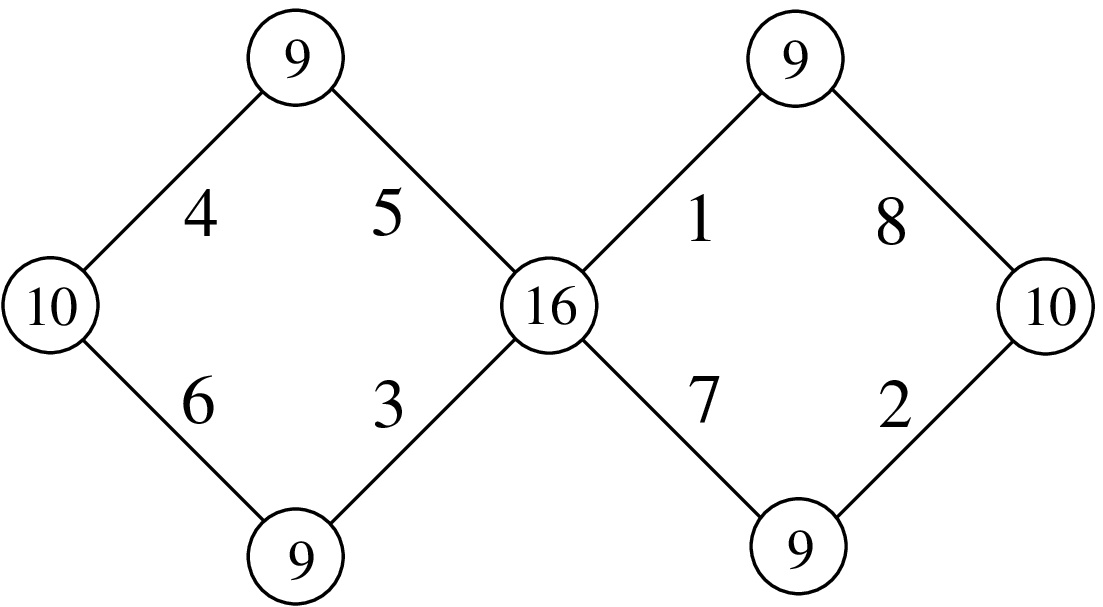} }}
    \qquad
    \subfloat[Graph $H$]{{\includegraphics[width=5cm]{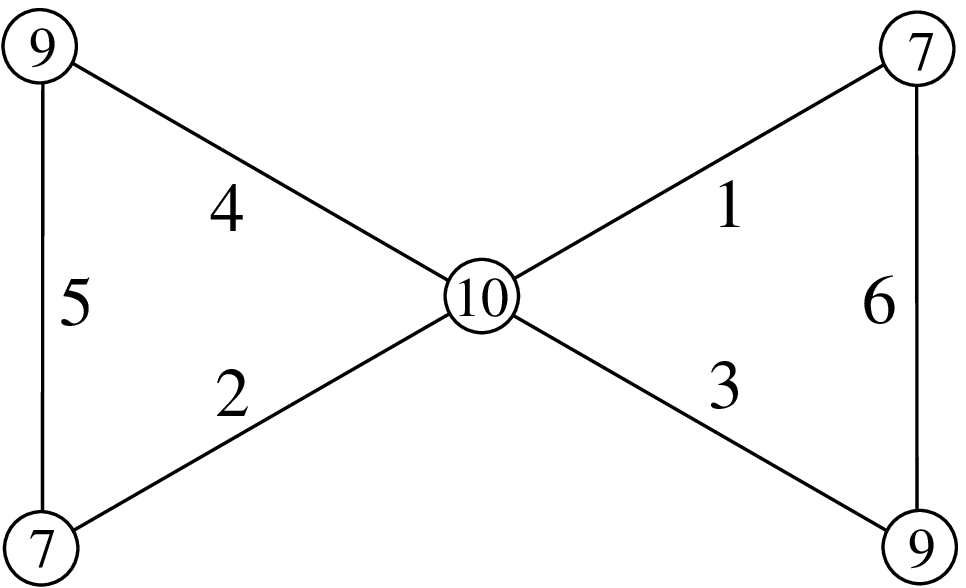} }}
    \caption{Local antimagic $3$-colorings of graphs $G$ and $H$}
    \label{fig:Ex}
\end{figure}

Note that $\chi_{la}(G)=\chi_{la}(H)=3$.
It is easy to check that the above local antimagic $3$-colorings of $G$ and $H$ satisfy the conditions of Theorem~\ref{thm-G[H]} respectively.
Then, the labeling matrices of $G$ and $H$ are shown below:
\[M_g=\left(\begin{array}{ccccccc}
* & * & * & * & 8 &  * & 1\\
* & * & * & * & 2 &  * & 7\\
* & * & * & * & * &  6 & 3\\
* & * & * & * & * &  4 & 5\\
8 & 2 & * & * & * & * & *\\
* & * & 6 & 4 & * & * & *\\
1 & 7 & 3 & 5 & * & * & *
\end{array}\right),\qquad
M_h=\left(\begin{array}{ccccc}
* & * & 6 &  * & 1\\
* & * & * &  5 & 2\\
6 & * & * &  * & 3\\
* & 5 & * &  * & 4\\
1 & 2 & 3 & 4 & *
\end{array}\right).\]
Let
{\small
\begin{alignat*}{2}
L_1=\left(\begin{array}{ccccc}
* & * & 42 &  * & 1\\
* & * & * &  29 & 14\\
42 & * & * &  * & 15\\
* & 29 & * &  * & 28\\
1 & 14 & 15 & 28 & *
\end{array}\right), & \quad
L_2=\left(\begin{array}{ccccc}
* & * & 41 &  * & 2\\
* & * & * &  30 & 13\\
41 & * & * &  * & 16\\
* & 30 & * &  * & 27\\
2 & 13 & 16 & 27 & *
\end{array}\right), &\quad
L_3=\left(\begin{array}{ccccc}
* & * & 40 &  * & 3\\
* & * & * &  31 & 12\\
40 & * & * &  * & 17\\
* & 31 & * &  * & 26\\
3 & 12 & 17 & 26 & *
\end{array}\right),\\
L_4=\left(\begin{array}{ccccc}
* & * & 39 &  * & 4\\
* & * & * &  32 & 11\\
39 & * & * &  * & 18\\
* & 32 & * &  * & 25\\
4 & 11 & 18 & 25 & *
\end{array}\right), &\quad
L_5=\left(\begin{array}{ccccc}
* & * & 38 &  * & 5\\
* & * & * &  33 & 10\\
38 & * & * &  * & 19\\
* & 33 & * &  * & 24\\
5 & 10 & 19 & 24 & *
\end{array}\right), & \quad
L_6=\left(\begin{array}{ccccc}
* & * & 37 &  * & 6\\
* & * & * &  34 & 9\\
37 & * & * &  * & 20\\
* & 34 & * &  * & 23\\
6 & 9 & 20 & 23 & *
\end{array}\right),\\
L_7=\left(\begin{array}{ccccc}
* & * & 36 &  * & 7\\
* & * & * &  35 & 8\\
36 & * & * &  * & 21\\
* & 35 & * &  * & 22\\
7 & 8 & 21 & 22 & *
\end{array}\right). &&
\end{alignat*}
}

Obviously, for each $1\leq i\leq 7$, the row sums of $L_i$ are 43, 43, 57, 57, 58 respectively.
Let $\Omega$ be a magic square of order 5 with row sum $65$ and $\Omega_i=\Omega +25(i-1)J_5$,
where $1\le i\le 8$.
For each $1\leq i\leq 8$, let
$\Psi_i=\Omega_i+42J_5$. Then, the labeling matrix of $G[H]$ is

\[\begin{pmatrix}
L_1 & \bigstar & \bigstar & \bigstar & \Psi_8 & \bigstar & \Psi_1\\
\bigstar & L_2 & \bigstar & \bigstar & \Psi_2 & \bigstar & \Psi_7\\
\bigstar & \bigstar & L_3 & \bigstar & \bigstar & \Psi_6 & \Psi_3\\
\bigstar & \bigstar & \bigstar & L_4 & \bigstar & \Psi_4 & \Psi_5\\
\Psi_8^T & \Psi_2^T &  \bigstar & \bigstar & L_5 & \bigstar & \bigstar\\
\bigstar & \bigstar & \bigstar & \Psi_6^T & \Psi_4^T & L_6 & \bigstar\\
\Psi_1^T & \Psi_7^T & \Psi_3^T & \Psi_5^T & \bigstar & \bigstar & L_7
\end{pmatrix}\]
By calculating the row sums of the above matrix, we obtain that
the distinct row sums are 1468, 1482, 1483, 1593, 1607, 1608, 2643, 2657, 2658.
Thus, $\chi_{la}(G[H])\le 9$. 
\end{example}

In \cite{Cizek+K}, N. \v{C}i\v{z}ek and S. Klav\v{z}ar gave the lower bound of chromatic number of the lexicographic product as follows.

\begin{corollary}[{\cite[Corollary~3]{Cizek+K}}]\label{cor-LB1} Let $G$ be a nonbipartite graph. Then for any graph $H$, $\chi(G[H])\ge 2\chi(H) + \big\lceil\frac{\chi(H)}{k}\big\rceil$, where $k\geq 1$ and $2k + 1$ is the length of a shortest odd cycle in $G$.
\end{corollary}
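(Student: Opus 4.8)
The plan is to reduce the statement to a counting argument on a single odd cycle. Since $2k+1$ is the length of a shortest odd cycle of the nonbipartite graph $G$, the graph $G$ contains $C_{2k+1}$ as a subgraph, hence $C_{2k+1}[H]$ is a subgraph of $G[H]$; indeed the lexicographic product is monotone under taking subgraphs in the first coordinate. Therefore $\chi(G[H])\ge \chi(C_{2k+1}[H])$, and it suffices to prove $\chi(C_{2k+1}[H])\ge 2\chi(H)+\lceil \chi(H)/k\rceil$.

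Next I would set up the fibre structure. Write the cycle as $v_0v_1\cdots v_{2k}v_0$ and, for $0\le i\le 2k$, let $F_i=\{v_i\}\times V(H)$ be the $i$-th fibre of $C_{2k+1}[H]$. Each $F_i$ induces a copy of $H$, and by the definition of the lexicographic product every vertex of $F_i$ is completely joined to every vertex of $F_{i+1}$ (indices mod $2k+1$). Given any proper $t$-colouring of $C_{2k+1}[H]$, its restriction to $F_i$ is a proper colouring of $H$ and so uses at least $\chi(H)$ colours; put $m:=\chi(H)$ and choose a subset $C_i$ of exactly $m$ of those colours. Because $F_i$ is completely joined to $F_{i+1}$, the colour sets occurring on the two fibres are disjoint, so $C_i\cap C_{i+1}=\emptyset$ for all $i$ taken mod $2k+1$; in other words $C_0,\dots,C_{2k}$ is an $m$-fold (set) colouring of the cycle $C_{2k+1}$ inside a palette of $t$ colours.

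Finally I would run the incidence count. For a fixed colour $c$, the index set $\{i: c\in C_i\}$ contains no two cyclically consecutive indices, hence is an independent set of $C_{2k+1}$ and so has size at most $k$. Double counting the pairs $(i,c)$ with $c\in C_i$ gives $(2k+1)m=\sum_{i}|C_i|=\sum_{c}\bigl|\{i:c\in C_i\}\bigr|\le tk$, whence $t\ge (2k+1)m/k=2m+m/k$; since $t$ and $2m$ are integers this forces $t\ge 2m+\lceil m/k\rceil$. Combined with the first reduction, this yields $\chi(G[H])\ge 2\chi(H)+\lceil\chi(H)/k\rceil$. There is no genuinely hard step here: the only point requiring a little care is the observation that the fibre colour sets behave exactly like an $m$-fold colouring of $C_{2k+1}$, which is what lets one invoke the clean independent-set bound on the odd cycle; alternatively one could phrase the last paragraph as $\chi(C_{2k+1}[H])\ge\chi_m(C_{2k+1})$ together with the known value $\chi_m(C_{2k+1})=2m+\lceil m/k\rceil$ of the $m$-fold chromatic number of an odd cycle, but since only its lower bound is needed the direct count above is enough.
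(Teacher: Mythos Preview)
Your argument is correct. The reduction to the odd cycle via monotonicity of the lexicographic product in the first coordinate is valid, and the double count on the fibre colour sets is exactly the standard lower bound for the $m$-fold chromatic number of $C_{2k+1}$; the inequality $(2k+1)m\le tk$ and the rounding step are fine.

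As for comparison: the paper does not prove this corollary at all. It is quoted verbatim from \v{C}i\v{z}ek and Klav\v{z}ar (their Corollary~3) and is used only as a black box to supply the lower bound in Corollary~\ref{cor-ULbounds}. So there is no in-paper proof to compare against; you have supplied a self-contained argument where the authors were content to cite the literature. Your proof is in fact the natural one---the original paper of \v{C}i\v{z}ek and Klav\v{z}ar derives it from a more general bound in terms of circular cliques, but the specialisation to a shortest odd cycle amounts to precisely the fibre/independent-set count you wrote down.
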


Combining Theorem \ref{thm-G[H]} and Corollary \ref{cor-LB1}, we obtain the following results.

\begin{corollary}\label{cor-ULbounds}
Suppose $G$ and $H$ are graphs satisfying the conditions listed in Theorem~\ref{thm-G[H]}.
If the length of a shortest odd cycle in $G$ is $2k+1$, then
 $2\chi(H) + \lceil\frac{\chi(H)}{k}\rceil \le \chi(G[H]) \le \chi_{la}(G[H]) \le \chi_{la}(G)\chi_{la}(H)$.
In particular, if $C_3$ is a subgraph of $G$, then $3\chi(H)  \le \chi_{la}(G[H])\le \chi_{la}(G)\chi_{la}(H)$.
\end{corollary}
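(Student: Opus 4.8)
The plan is to simply chain together the two results cited immediately before the statement. First I would note that since $G$ and $H$ satisfy all the hypotheses (i)--(v) of Theorem~\ref{thm-G[H]}, that theorem applies directly and yields the rightmost inequality $\chi_{la}(G[H]) \le \chi_{la}(G)\chi_{la}(H)$. Next, the middle inequality $\chi(G[H]) \le \chi_{la}(G[H])$ is just the general elementary fact that any local antimagic labeling induces a proper vertex coloring, so the chromatic number never exceeds the local antimagic chromatic number; this was already observed in the Introduction and requires no argument here.

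For the leftmost inequality, I would invoke Corollary~\ref{cor-LB1} of \v{C}i\v{z}ek and Klav\v{z}ar. The one point that needs checking is that $G$ is nonbipartite, which is exactly the hypothesis that $G$ contains an odd cycle, i.e.\ that a shortest odd cycle of $G$ has some finite length $2k+1$ with $k \ge 1$. (Implicitly this is forced anyway: condition (v) of Theorem~\ref{thm-G[H]} together with $g$ being a local antimagic coloring only makes sense when $G$ has edges, and to get the stated lower bound of the corollary we are assuming $G$ has an odd cycle; if $G$ were bipartite the leftmost term $2\chi(H)+\lceil\chi(H)/k\rceil$ would not be defined since no such $k$ exists.) Granting this, Corollary~\ref{cor-LB1} gives $\chi(G[H]) \ge 2\chi(H) + \lceil\chi(H)/k\rceil$, completing the chain
\[
2\chi(H) + \left\lceil\frac{\chi(H)}{k}\right\rceil \;\le\; \chi(G[H]) \;\le\; \chi_{la}(G[H]) \;\le\; \chi_{la}(G)\chi_{la}(H).
\]

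Finally, for the ``in particular'' clause, suppose $C_3$ is a subgraph of $G$. Then the shortest odd cycle of $G$ has length $3 = 2k+1$, so $k = 1$ and $\lceil\chi(H)/k\rceil = \chi(H)$; substituting into the displayed chain gives $3\chi(H) \le \chi(G[H]) \le \chi_{la}(G[H]) \le \chi_{la}(G)\chi_{la}(H)$, and dropping the two innermost terms yields $3\chi(H) \le \chi_{la}(G[H]) \le \chi_{la}(G)\chi_{la}(H)$ as claimed. I do not anticipate any real obstacle here: the corollary is essentially a bookkeeping combination of two cited results, and the only thing to be careful about is making explicit that the hypothesis on the shortest odd cycle of $G$ is precisely what licenses the application of Corollary~\ref{cor-LB1}.
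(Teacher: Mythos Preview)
Your proposal is correct and matches the paper's own proof essentially line for line: the paper simply notes that $\chi(G[H])\le \chi_{la}(G[H])$ is trivial, that the lower bound follows from Corollary~\ref{cor-LB1}, and that the upper bound follows from Theorem~\ref{thm-G[H]}. Your extra remarks about nonbipartiteness and the $k=1$ substitution are fine elaborations but add nothing the paper omits by necessity.
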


\begin{proof} $\chi(G[H]) \le \chi_{la}(G[H])$ is trivial. The lower bound follows from Corollary~\ref{cor-LB1} and the upper bound follows from Theorem~\ref{thm-G[H]}.
\end{proof}

\begin{corollary}\label{cor-ULbounds-reg} Let $G$ and $H$ be regular graphs and $H$ admit a local antimagic $\chi_{la}(H)$-coloring $h$.
Suppose for each vertex of $H$, the number of even incident edge labels equals the number of odd incident edge labels under $h$.
If the length of a shortest odd cycle in $G$ is $2k+1$, then $2\chi(H) + \lceil\frac{\chi(H)}{k}\rceil \le \chi(G[H]) \le \chi_{la}(G[H]) \le \chi_{la}(G)\chi_{la}(H)$.
In particular, if $C_3$ is a subgraph of $G$,
then $3\chi(H)  \le \chi_{la}(G[H])\le \chi_{la}(G)\chi_{la}(H)$.
\end{corollary}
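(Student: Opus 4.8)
The plan is to derive Corollary~\ref{cor-ULbounds-reg} from Corollary~\ref{cor-ULbounds} by checking that the regularity of $G$ and $H$, together with the stated parity condition on $h$, automatically yields all the hypotheses of Theorem~\ref{thm-G[H]}. First I would fix a local antimagic $\chi_{la}(G)$-coloring $g$ of $G$ (it exists because $G$, being nonbipartite, is connected and different from $K_2$, so $\chi_{la}(G)$ is defined) and retain the given coloring $h$ of $H$. Write $d$ for the common degree of $G$, $r$ for that of $H$, $p=|V(G)|$ and $n=|V(H)|$; note $r\ge 2$, since a connected $1$-regular graph is $K_2$, which admits no local antimagic labeling, contradicting the existence of $h$.

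Then I would verify conditions (i)--(v) of Theorem~\ref{thm-G[H]}. Condition (i) is exactly the parity hypothesis. Conditions (ii) and (iv) hold vacuously: $\deg_H(u)=\deg_H(v)=r$ and $\deg_G(u)=\deg_G(v)=d$ for all vertices, so the required degree equalities are automatic. For condition (iii), plugging in $\deg_H(u)=\deg_H(v)=r$ gives
\[
\Bigl(p\,h^+(u)-\tfrac12 r(p-1)\Bigr)-\Bigl(p\,h^+(v)-\tfrac12 r(p-1)\Bigr)=p\bigl(h^+(u)-h^+(v)\bigr)\neq 0
\]
whenever $h^+(u)\ne h^+(v)$, since $p\ge 1$. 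For condition (v), plugging in $\deg_G(u)=\deg_G(v)=d$ gives
\[
\Bigl(g^+(u)n^3-\tfrac{(n^3-n)d}{2}\Bigr)-\Bigl(g^+(v)n^3-\tfrac{(n^3-n)d}{2}\Bigr)=n^3\bigl(g^+(u)-g^+(v)\bigr)\neq 0
\]
whenever $g^+(u)\ne g^+(v)$, since $n\ge 1$. Thus $G$ and $H$ satisfy all the conditions in the statement of Theorem~\ref{thm-G[H]}.

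Finally, since a shortest odd cycle of $G$ has length $2k+1$, $G$ is nonbipartite, so Corollary~\ref{cor-ULbounds} applies verbatim and gives $2\chi(H)+\lceil\chi(H)/k\rceil\le\chi(G[H])\le\chi_{la}(G[H])\le\chi_{la}(G)\chi_{la}(H)$. The ``in particular'' assertion is the case $k=1$: if $C_3\subseteq G$, a shortest odd cycle has length $3=2\cdot1+1$, and $2\chi(H)+\lceil\chi(H)/1\rceil=3\chi(H)$.

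I expect no real obstacle; the whole argument is the remark that regularity collapses conditions (ii)--(v) of Theorem~\ref{thm-G[H]} either to ``equal degrees'' or, after the identical degree terms cancel, to the fact that $g$ and $h$ are proper colorings (i.e.\ that adjacent vertices receive distinct induced sums). The single point deserving a line of care is that the coloring $h$ needed inside the proof of Theorem~\ref{thm-G[H]} through Theorem~\ref{thm-pH} is precisely the one furnished by the hypothesis, carrying its parity property; this is immediate.
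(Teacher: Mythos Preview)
Your proof is correct and matches the paper's intended argument: the paper states Corollary~\ref{cor-ULbounds-reg} without proof, immediately after Corollary~\ref{cor-ULbounds}, treating it as the obvious specialization in which regularity of $G$ and $H$ trivializes conditions~(ii)--(v) of Theorem~\ref{thm-G[H]}, exactly as you verify. (One cosmetic point: the clause ``$G$, being nonbipartite, is connected'' is not a valid implication---connectedness is a standing hypothesis in the paper, declared in the Introduction, not a consequence of containing an odd cycle.)
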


By applying Corollary \ref{cor-ULbounds-reg}, we can obtain $\chi_{la}(G[H])$ for some graphs $G$ and $H$. An example is shown in Example \ref{Ex-2}.

\begin{example}\label{Ex-2}
Let $G=C_3\times K_2$ and $H$ be the octahedral graph. Figure \ref{fig:Ex-2} presents their local antimagic $3$-colorings which are shown in \cite{LauShiu2022-1}.

\begin{figure}[H]
    \centering
    \subfloat[Graph $G=C_3\times K_2$]{{\includegraphics[width=4cm]{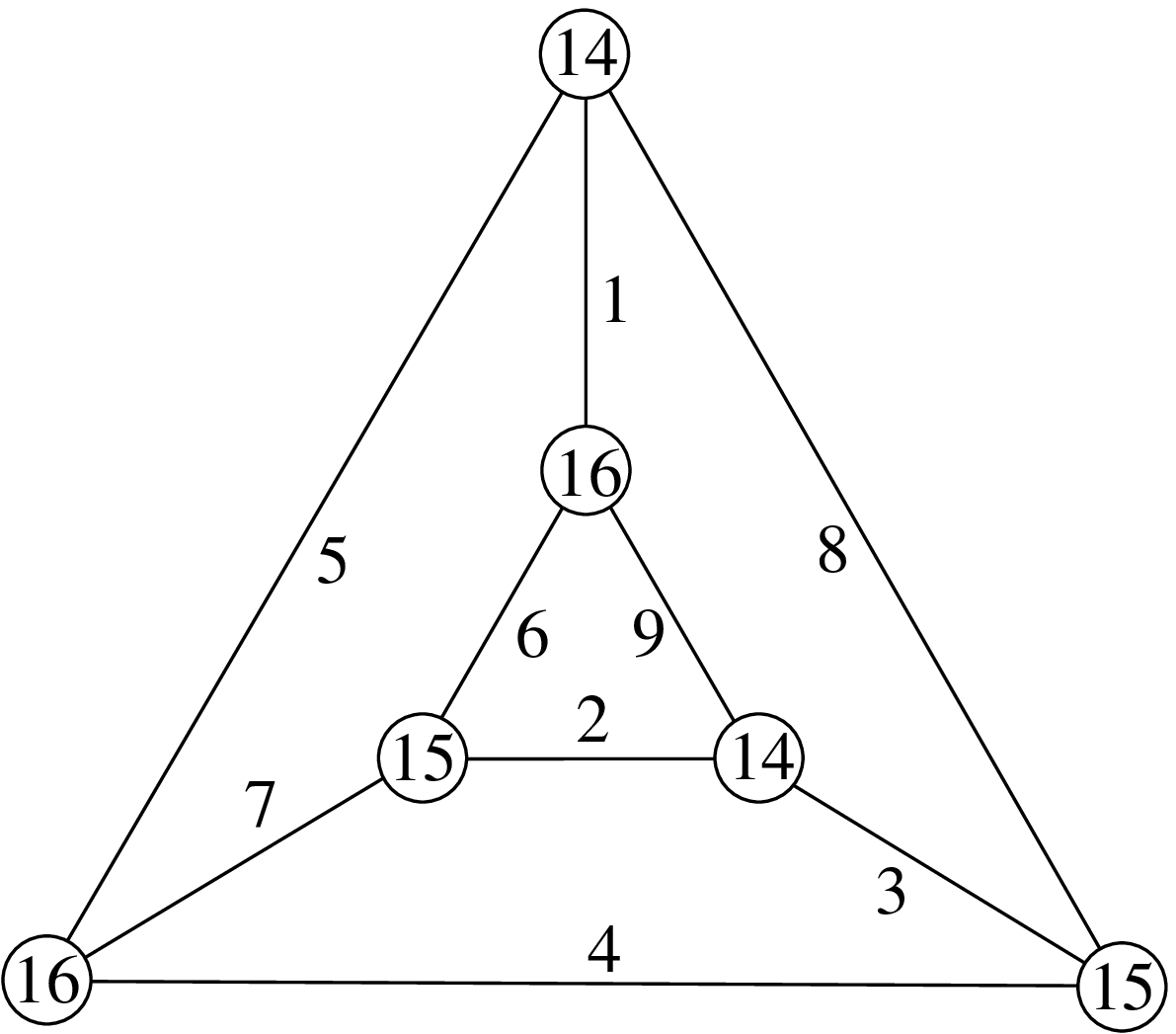} }}
    \qquad
    \subfloat[Graph $H$]{{\includegraphics[width=3cm]{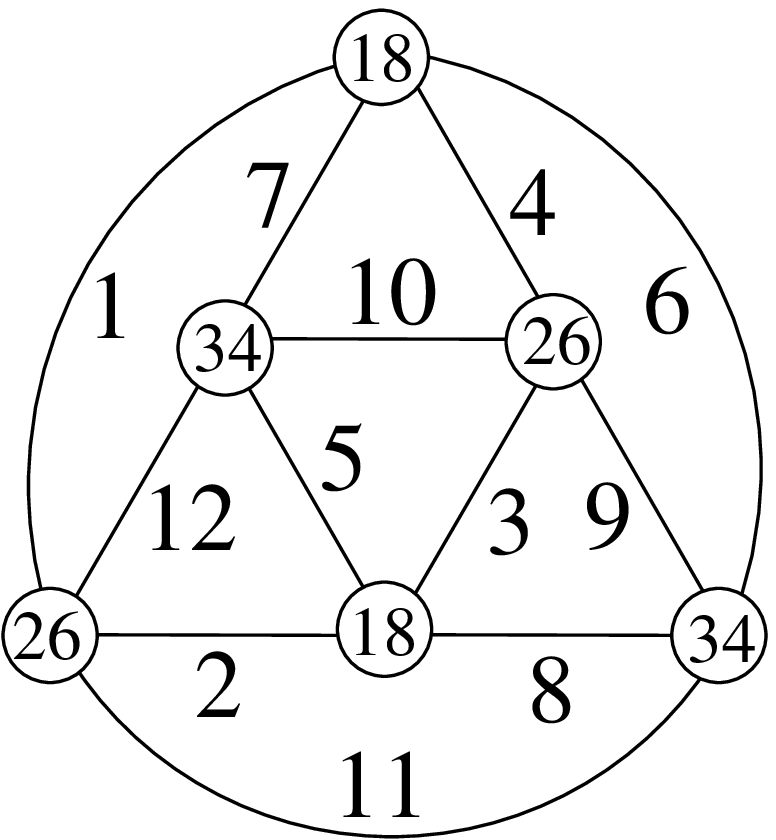} }}
    \caption{Local antimagic $3$-colorings of graphs $G$ and $H$}
    \label{fig:Ex-2}
\end{figure}

It is easy to verify that $G$ and $H$ satisfy the conditions of Corollary~\ref{cor-ULbounds-reg},
which implies that $3\chi(H) \le \chi_{la}(G[H]) \le \chi_{la}(G)\chi_{la}(H)$. Since $\chi_{la}(G) = \chi_{la}(H) =3$,  $\chi_{la}(G[H]) = 9$. 
\end{example}

%

%

In~\cite[Theorem 3.3]{LNS-dmgt}, the first two authors proved that $\chi_{la}(C_{2m} \vee O_{2n}) = 3$ for $m\ge 2$, $n\ge 1$, where $C_{2m} \vee O_{2n}$ is the join of graphs $C_{2m}$ and $O_{2n}$.
In the following, we give another local antimagic 3-coloring of $C_{2m} \vee O_{2n}$ that satisfies  the conditions (i) and (ii)  of Theorem~\ref{thm-G[H]}.

\begin{theorem}\label{thm-C2mVO2n} For $m\ge 2$ and $n\ge 1$, there is a local antimagic $3$-coloring of $C_{2m}\vee O_{2n}$ satisfying conditions (i) and (ii) of Theorem~\ref{thm-G[H]}.
\end{theorem}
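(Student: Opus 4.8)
The plan is to construct an explicit edge-labeling of $C_{2m}\vee O_{2n}$ and verify directly that it is a local antimagic $3$-coloring with the parity balance property (i) and the degree-compatibility property (ii). First I would fix notation: let the cycle vertices be $v_1,\dots,v_{2m}$ (in cyclic order) and the independent-set vertices be $w_1,\dots,w_{2n}$. The graph has $2m$ cycle edges and $4mn$ ``spoke'' edges $v_iw_j$, so the total size is $2m+4mn = 2m(2n+1)$, and we must use labels $\{1,2,\dots,2m(2n+1)\}$. The induced coloring should assign one colour to the $w_j$'s (they are pairwise nonadjacent, so they may share a colour, and in fact must, to keep the count at $3$) and exactly two colours to the cycle vertices, alternating $v_i$ by parity of $i$ — this is forced since $C_{2m}$ needs $2$ colours and the $w_j$ colour must differ from both.

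Second, I would design the labeling so that the spoke-edge labels split as cleanly as possible. A natural device is to handle the spokes incident to each $w_j$ as a block and use a magic-rectangle / $S_p$-type argument (Lemma~\ref{lem-S}) so that $f^+(w_j)$ is the same for all $j$, while the partial sums contributed to the two parity classes of cycle vertices are controlled. Concretely, for each cycle vertex $v_i$ the spoke contribution is a sum of $2n$ labels, and I would arrange (using consecutive blocks of $2n$ labels paired in increasing/decreasing order as in Lemma~\ref{lem-S}) that this spoke contribution depends only on the parity of $i$; then I add the $2m$ smallest (or a suitably chosen set of) labels on the cycle edges, alternating large/small around the even cycle so that consecutive $v_i$ get different totals. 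Property (i) — equal numbers of odd and even incident labels at every vertex — is the delicate constraint that dictates how the blocks of consecutive integers get distributed: each $v_i$ has degree $2n+2$ (even) and each $w_j$ has degree $2m$ (even), so ``half odd, half even'' is at least numerically possible, and the block-pairing from $S_p$ with $p$ even automatically balances parity within each block. Property (ii) is essentially automatic here because all vertices receiving the same colour already have the same degree ($v_i$'s in one parity class all have degree $2n+2$; the $w_j$'s all have degree $2m$), so there is nothing to check once the three colour classes are the two parity classes of the cycle plus the independent set.

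Third, I would verify the three claimed colour sums are pairwise distinct. This reduces to: (a) the two cycle-parity sums differ — arrange the cycle-edge labels so the two alternating partial sums around $C_{2m}$ are unequal (possible since $2m\ge 4$, with a small separate check for $m=2$); (b) neither cycle sum equals $f^+(w_j)$ — since $f^+(w_j)$ is a sum of $2m$ spoke labels that are large on average while each $f^+(v_i)$ mixes $2n$ spoke labels with $2$ cycle-edge labels, a counting/magnitude estimate separates them, again with attention to the smallest cases. I would organize the write-up by giving the labeling in closed form (or via an explicit labeling matrix, mirroring the style of Example~\ref{eg-OUC4[OUC3]}), then listing the three vertex sums as formulas in $m$ and $n$, then checking distinctness and conditions (i), (ii).

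The main obstacle I anticipate is simultaneously satisfying the parity-balance condition (i) and the distinctness of the colour sums: forcing ``half odd, half even'' at every vertex rigidly constrains which residue classes of labels can sit on spokes versus cycle edges, and a naive assignment that makes $f^+(w_j)$ constant may accidentally collide with a cycle sum or violate parity at the cycle vertices. I expect to resolve this by choosing $p=2n$ even in the $S_p$ construction (so intra-block parity is free) and by reserving a specifically chosen parity-balanced set of $2m$ labels for the cycle edges; the parity of $m$ may split into two subcases, and the base cases $m=2$ (and perhaps $n=1$) will likely need to be exhibited by hand.
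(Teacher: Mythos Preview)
Your architecture matches the paper's: split $C_{2m}\vee O_{2n}$ into the cycle $C_{2m}$ and the bipartite piece $K_{2m,2n}$, label the bipartite piece with $S_p$-blocks so that all $w_j$ receive the same induced label, and aim for the three colour classes to be the two parity classes of the cycle together with the independent set. Your observation that condition~(ii) is automatic once those are the colour classes is correct. Two of your heuristic steps, however, do not survive contact with the details.

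First, the plan to make the spoke contribution at $v_i$ depend \emph{only on the parity of $i$} is not what works in the paper, and for odd $m$ it essentially asks for a parity-balanced magic $m\times 2n$ rectangle, which need not exist. In the paper the spoke row-sums within each parity class form a genuine arithmetic progression with common difference $4$; the cycle edges are labelled $1,2,\dots,2m$ in natural order so that the cycle contributions $f^+(u_i)$ give the complementary progression, and the two cancel (after a one-row cyclic shift to handle $u_1$). So the cycle labels and bipartite labels are coupled, not decoupled as your outline assumes. Relatedly, your parity argument via ``$p=2n$ even'' controls the $v_i$ side but not the $w_j$ side: with $p$ even the $j$-th term of every $S_p^+(a)$ has the same parity, so balancing at $w_j$ still requires pairing $+$ and $-$ blocks exactly $m$-to-$m$. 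The paper instead forces parity by using $2S_m^{\pm}(a)$ versus $2S_m^{\pm}(a)-J_{m,1}$, so that each column of the bipartite matrix is entirely even or entirely odd by design.

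Second, the claim that distinctness of the three colour sums follows from ``a counting/magnitude estimate'' is too optimistic, and the small cases you flag ($m=2$, $n=1$) are not the dangerous ones. In the paper the differences $\phi^+(u_{\text{odd}})-\phi^+(w)$ and $\phi^+(w)-\phi^+(u_{\text{even}})$ are explicit quadratics in $m,n$ that change sign near the diagonal $m=n$; a case split over $n\ge m+2$, $n=m+1$, $n=m$, $n=m-1$, $m\ge n+2$ with discriminant checks is needed, and for $(m,n)=(4,3)$ the generic construction actually \emph{fails} (one difference vanishes), forcing an ad hoc labeling of $C_8\vee O_6$ built from a specific $8\times 6$ magic rectangle with hand-tuned column swaps. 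Whatever explicit labeling you write down will exhibit the same near-collisions when $|m-n|$ is small, so your case analysis should be organised around that regime rather than around the smallest individual parameters.
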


\begin{proof} Let $V(C_{2m})=\{u_i\;|\;1\le i\le 2m\}$ and $V(O_{2n})=\{v_j\;|\; 1\le j\le 2n\}$.
We separate $C_{2m}\vee O_{2n}$ into two edge-disjoint graphs, $C_{2m}$ and $O_{2m}\vee O_{2n}$, where $V(O_{2m})=V(C_{2m})$.

 Firstly, define a labeling $f$ for $C_{2m}$. Let $f: V(C_{2m})\to [1, 2m]$ such that $f(u_iu_{i+1})=i$, where $1\le i\le 2m$ and $u_{2m+1}=u_1$.
Thus, $f^+(u_1)=2m+1$, $f^+(u_{i})=2i-1$ for $2\le i\le 2m$.

 Next, we define a labeling $g$ for $O_{2m}\vee O_{2n}\cong K_{2m, 2n}$. The labeling matrix of $g$ is $\begin{pmatrix}\bigstar & B\\
B^T & \bigstar\end{pmatrix}$ under the vertex lists $V(O_{2m})=\{u_1, u_3,\dots, u_{2m-1}, u_2, \dots, u_{2m}\}$ and $V(O_{2n})=\{v_1, v_2,\dots, v_{2n}\}$. So we only need to fill the integers in $[2m+1, 2m+4mn]$ into the matrix $B$.

 Let $\mathcal M$ be a guide matrix as follows:

{\fontsize{6}{9}\selectfont
\[\left(\begin{array}{cc|cc|ccccccc} -2 & -3 & +\boxed{2} & -\boxed{2n-1} & +\boxed{3} & -(2n-1) & +\boxed{5} & -(2n-3) & \cdots & +\boxed{2n-3}& -5\\
-\boxed{2n+1} & -\boxed{2n} & -(2n+1) & +4 & -(2n) & +\boxed{4} & -(2n-2) & +\boxed{6} & \cdots & -6 & +\boxed{2n-2}\end{array}\right).\]}

 We replace each entry of $\mathcal M$ to a column vector according to the rules:
\begin{enumerate}[(1)]
\item replace $-a$ to $2S_m^-(a)-J_{m,1}$; replace $+a$ to $2S_m^+(a)-J_{m,1}$,
where $J_{m, 1}$ is an $m\times 1$ matrix with all entries 1;
\item replace $-\boxed{a}$ to $2S_m^-(a)$; replace $+\boxed{a}$ to $2S_m^+(a)$.
\end{enumerate}
Let $\begin{pmatrix} B_1 \\B_2\end{pmatrix}$ be the resulting matrix, where $B_1$ and $B_2$ are $m\times 2n$ matrices.
The row sums of $B_1$ in column matrix is
\begin{align*}& \quad (2S_m^-(2)-J_{m,1})+(2S_m^-(3)-J_{m,1})+2S_m^+(2)+2S_m^-(2n-1) +\sum_{i=1}^{n-2} 2S_m^+(2i+1) + \sum_{j=2}^{n-1}[2S_m^-(2j+1)-J_{m,1}]\\
& = 4S_m^-(2n-1)+2[S_m^-(2)+S_m^+(2)]+2\sum_{i=1}^{n-2}[S_m^+(2i+1)+S_m^-(2i+1)]-nJ_{m,1}\\
& = 4S_m^-(2n-1) +2(3m+1)J_{m,1} +2\sum_{i=1}^{n-2} [m(4i+1)+1]J_{m,1} -nJ_{m,1} \\
& = 4S_m^-(2n-1) +[4mn^2-10mn+10m+n-2]J_{m,1}=A_1.\end{align*}

Clearly, the entries of the column matrix $A_1$ form a decreasing sequence with common difference 4.
Now the first column of $B_1$ is the vector $2S_m^-(2)-J_{m,1}$. We shift each entry of this vector downward to 1 and move the last entry of this vector to the top,
i.e., add the entries by 2 except the $(1,1)$-entry and subtract the $(1,1)$-entry by $2(m-1)$. Let this new matrix be $B_1'$.
Now, the first column of $B_1'$ has entries $2m+1,4m-1,4m-3,\ldots,2m+3$ so that the second entry up to the last entry of the first column of $B_1'$ form a decreasing sequence with common difference 2 and the difference between the first entry and second entry is $2-2m$.

Similarly the row sums of $B_2$ in column matrix is
\vskip-5mm

\begin{align*}& \quad 4S_m^-(2n+1)+4S_m^-(2n)+4S_m^+(4) +2\sum_{i=3}^{n-1}[S_m^+(2i)+S_m^-(2i)]-nJ_{m,1}\\
& = 4S_m^-(2n+1)+4(m(2n+3)+1)J_{m,1}+ [2m(n-3)(2n+3)+2(n-3)]J_{m,1}-nJ_{m,1}\\
& = 4S_m^-(2n+1) +[4mn^2+2mn-6m+n-2]J_{m,1}=A_2.
\end{align*}

It is clear that the entries of the column matrix $A_2$ form a decreasing sequence with common difference 4.

Combining the labelings $f$ and $g$,
we have a labeling $\phi$ for the whole graph $C_{2m}\vee O_{2n}$.
One may check that $\phi^+(u_{2j-1})=f^+(u_{2j-1})+r_j(B_1')=4mn^2-2mn+6m+n+1$ for each $1\le j\le m$; and
$\phi^+(u_{2i})=f^+(u_{2i})+r_i(B_2)=4mn^2+10mn-2m+n+1$ for each $1\le i\le m$.
Hence $\phi^+(u_{2i})>\phi^+(u_{2j-1})$ for $1\le i,j\le m$.

Clearly, the column sum of $\begin{pmatrix} B_1' \\B_2\end{pmatrix}$ is $(4mn+4m+1)m$. So $\phi^+(v_l)=(4mn+4m+1)m$.
\begin{align}\phi^+(u_{2j-1})-\phi^+(v_l) & =4mn^2-4m^2n-4m^2-2mn+5m+n+1\nonumber\\
& =4mn(n-m-1)-4m^2+2mn+5m+n+1. \label{eq-1}
\end{align}
If $n\ge m+2$, then $\phi^+(u_{2i})-\phi^+(v_l)>\phi^+(u_{2j-1})-\phi^+(v_l)\ge 4mn-4m^2+2mn+5m+n+1>0$.
\begin{align}\phi^+(v_l)-\phi^+(u_{2i}) & =  4m^2n-4mn^2+4m^2-10mn+3m-n-1\nonumber\\
& = 4mn(m-n-2) +4m^2-2mn+3m-n-1. \label{eq-2}\end{align}
If $m\ge n+2$, then $\phi^+(v_l)-\phi^+(u_{2j-1}) >\phi^+(v_l)-\phi^+(u_{2i}) >0$.
\begin{enumerate}[1)]
\item If $n=m+1$, then $\phi^+(u_{2j-1})-\phi^+(v_l)=-2m^2+8m+2\ne 0$ (since the discriminant is not a prefect square) and $\phi^+(u_{2i})-\phi^+(v_l)=10m^2+12m+2>0$
\item If $n=m$, then $\phi^+(u_{2j-1})-\phi^+(v_l)=-6m^2+6m+1<0$ and $\phi^+(u_{2i})-\phi^+(v_l)=6m^2-2m+1>0$.
\item If $n=m-1$, then $\phi^+(u_{2j-1})-\phi^+(v_l)=-10m^2+12m<0$, but $\phi^+(u_{2i})-\phi^+(v_l)=2m^2-8m\ne 0$ when $m\ne 4$.
So, for $n=m-1=3$, we have to find another labeling for $C_8\vee O_6$.

Label the edges of $C_8$ by 1, 8, 3, 2, 5, 4, 7, 6 in natural order. Let this labeling be $f$. So the induced vertex of $u_1, u_2, \dots, u_8$ are 7, 9, 11, 5, 7, 9 ,11, 13.

We start from a $8\times 6$ magic rectangle $\Omega$ (shown below).
 Add each entry by 8 and swap some entries within the same column (indicated in italic).  We have
\[\Omega=\begin{pmatrix}
1 & 44 & 9 & 36 & 29 & 28\\
2 & 43 & 10 & 35 & 30 & 27\\
3 & 42 & 11 & 34 & 31 & 26\\
4 & 41 & 12 & 33 & 32 & 25\\
45 & 8 & 37 & 16 & 17 & 24\\
46 & 7 & 38 & 15 & 18 & 23\\
47 & 6 & 39 & 14 & 19 & 22\\
48 & 5 & 40 & 13 & 20 & 21
\end{pmatrix}\longrightarrow \begin{array}{c}7\\11\\7\\11\\9\\9\\13\\5\end{array}\left(\begin{array}{cccccc}
9 & 52 & 17 & 44 & 37 & 36\\
10 & 51 & 18 & 43 & 38 & \it{31}\\
11 & 50 & 19 & 42 & 39 & 34\\
12 & 49 & 20 & 41 & 40 & \it{29}\\
53 & 16 & 45 & 24 & \it{27} & 32\\
54 & 15 & 46 & 23 & 26 & \it{33}\\
55 & 14 & 47 & 22 & \it{25} & 30\\
56 & 13 & 48 & 21 & 28 & \it{35}
\end{array}\right)\begin{array}{c}202\\202\\202\\202\\206\\206\\206\\206\end{array}\]
This matrix forms a labeling matrix of a labeling $g$ of $K_{8,6}$ under the vertex list $\{u_1, u_3, u_5, u_7, u_2, u_6,\break u_8, u_4\}$ of $C_8$,
The column in front of the matrix is the corresponding induced vertex labels under $f$ on $C_8$, and the column behind of the matrix is the induced vertex labels of the labeling $\phi$ for $C_8\vee O_6$.  Thus $\phi^+(u_{2i-1})=202$, $\phi^+(u_{2i})=206$ and $\phi^+(v_j)=260$ for $1\le i\le 4$ and $1\le j\le 6$.
\end{enumerate}
Clearly all labels are used. So $\phi$ is a local antimagic $3$-coloring for $C_{2m}\vee O_{2n}$. Moreover, the number of even incident edge labels equals the number of odd incident edge labels for each vertex. Hence $\phi$ satisfies condition (i) and (ii) of Theorem~\ref{thm-G[H]}
\end{proof}

\begin{corollary}\label{cor-C3K2[CmOn]} If $G=C_3\times K_2$ and $H=C_{2m} \vee O_{2n}$, $m\ge 2$, $n\ge 1$, then $\chi_{la}(G[H]) = 9$. \end{corollary}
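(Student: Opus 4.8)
The plan is to establish both $\chi_{la}(G[H])\ge 9$ and $\chi_{la}(G[H])\le 9$. For the lower bound, $G=C_3\times K_2$ is the triangular prism: it is nonbipartite with a shortest odd cycle of length $3$, so $k=1$ in Corollary~\ref{cor-LB1}, which gives $\chi(G[H])\ge 2\chi(H)+\chi(H)=3\chi(H)$; since $\chi(G[H])\le\chi_{la}(G[H])$, it then suffices that $\chi(H)=3$. For the upper bound I will apply Theorem~\ref{thm-G[H]}, so the real content is to verify its five hypotheses for the pair $(G,H)$ and to record $\chi_{la}(G)=\chi_{la}(H)=3$.

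The data about $G$ are quick: $G$ is $3$-regular, contains a triangle, and by Example~\ref{Ex-2} (from~\cite{LauShiu2022-1}) it admits a local antimagic $3$-coloring $g$; because the prism is regular, hypothesis (iv) of Theorem~\ref{thm-G[H]} holds trivially and hypothesis (v) collapses to ``$g^+(u)\ne g^+(v)$'', so both hold for $g$. On the $H$ side, $C_{2m}$ is bipartite, hence $\chi(H)=\chi(C_{2m}\vee O_{2n})=2+1=3$, and $\chi_{la}(H)\ge\chi(H)=3$ since any local antimagic labeling induces a proper coloring. Theorem~\ref{thm-C2mVO2n} supplies a local antimagic $3$-coloring $\phi$ of $H$ that already satisfies hypotheses (i) and (ii) of Theorem~\ref{thm-G[H]}; in particular $\chi_{la}(H)=3$. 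The one point left is hypothesis (iii), checked with $p=|V(G)|=6$.

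Here $\phi$ has exactly three colour classes: the vertices $u_{2j-1}$ of $C_{2m}$ and the vertices $u_{2i}$ of $C_{2m}$, each of $H$-degree $2n+2$, and the vertices $v_l$ of $O_{2n}$, each of $H$-degree $2m$. Between the two $u$-classes the degrees coincide, so hypothesis (iii) there reduces to $\phi^+(u_{2j-1})\ne\phi^+(u_{2i})$, which is immediate from the induced values in the proof of Theorem~\ref{thm-C2mVO2n}. The genuine requirements are, for each $u$-class against the $v$-class,
\[6(\phi^+(u_{2j-1})-\phi^+(v_l))\ne 5(n+1-m),\qquad 6(\phi^+(u_{2i})-\phi^+(v_l))\ne 5(n+1-m),\]
into which one substitutes the closed forms of $\phi^+(u_{2j-1})-\phi^+(v_l)$ and $\phi^+(u_{2i})-\phi^+(v_l)$ recorded in~\eqref{eq-1} and~\eqref{eq-2}. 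When $m=n+1$, $H$ is $2m$-regular, all $H$-degrees agree, and (iii) is automatic regardless of which coloring of Theorem~\ref{thm-C2mVO2n} is used (this covers the exceptional $C_8\vee O_6$ case, whose colours are $202,206,260$). When $m\ne n+1$, each displayed relation is a quadratic Diophantine inequality in $(m,n)$; I would settle it by a modular reduction (working modulo $12$ narrows the problem to residue classes with $n\equiv m-1\pmod{12}$) combined, on those classes, with an estimate placing the relevant integer strictly between two consecutive squares for all large $m$ together with a direct check of the bounded remainder, in the spirit of the discriminant argument in part 1) of the proof of Theorem~\ref{thm-C2mVO2n}.

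With the five hypotheses confirmed, Corollary~\ref{cor-ULbounds}, in its ``$C_3$ is a subgraph of $G$'' form, gives $3\chi(H)\le\chi_{la}(G[H])\le\chi_{la}(G)\chi_{la}(H)$, i.e. $9\le\chi_{la}(G[H])\le 9$, whence $\chi_{la}(G[H])=9$. The main obstacle is exactly the verification of hypothesis (iii): turning the two displayed non-equalities into clean ``no positive-integer solutions'' statements uniformly in $m$ and $n$. Everything else is bookkeeping with results already available in the excerpt.
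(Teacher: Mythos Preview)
Your overall architecture matches the paper's proof exactly: the lower bound via $C_3\subseteq G$ and $\chi(H)=3$, the upper bound via Theorem~\ref{thm-G[H]}, with (i)--(ii) imported from Theorem~\ref{thm-C2mVO2n}, (iv)--(v) trivial by regularity of $G$, and (iii) as the only substantive check. You also correctly isolate the $m=n+1$ case (where $H$ is regular and (iii) is automatic, including the exceptional $C_8\vee O_6$ labeling).

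Where you diverge is in the execution of (iii). The paper does not use a modular sieve at all; it simply substitutes \eqref{eq-1} and \eqref{eq-2} and shows by elementary inequalities that the resulting expressions \eqref{eq-3} and \eqref{eq-4} have a definite sign on each of the ranges $n\le m$, $n\ge m+2$ (for \eqref{eq-3}) and $m\le n$, $m\ge n+2$ (for \eqref{eq-4}), leaving only $n=m+1$, settled by a single discriminant check, and $m=n+1$, which is the regular case. This is shorter and fully rigorous. Your modulo-$12$ reduction is correct as far as it goes (both expressions are $\equiv \pm(m-n-1)\pmod{12}$), but it leaves an infinite residue class to handle and you do not carry that out; the paper's sign argument avoids this detour entirely. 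I would recommend replacing your sketched modular plan with the direct case split.
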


\begin{proof} Keep all notation defined in the proof of Theorem~\ref{thm-C2mVO2n}. Now $\deg_H(u_i)=2n+2$, $\deg_H(v_l)=2m$ and $p=6$. By Theorems~\ref{thm-G[H]} and \ref{thm-C2mVO2n}, it suffices to check condition (iii) of Theorem~\ref{thm-G[H]}, i.e., $6[\phi^+(u_{1})-\phi^+(v_{1})]-5(n+1-m)\ne 0$ and $6[\phi^+(v_1)-\phi^+(u_2)] -5(m-n-1)\ne 0$.

 By \eqref{eq-1}, we have
\begin{align}& \quad 6[4mn(n-m-1)-4m^2+2mn+5m+n+1]-5(n+1-m)\nonumber\\& =-24m^2+24mn^2-24m^2n-12mn+35m+n+1\nonumber\\
& = 24mn(n-m)-24m(m-1)-12mn+11m+n+1.\label{eq-3}\end{align}

 Clearly $\eqref{eq-3}<0$ for $n\ge m$. When $n\ge m+2$, $\eqref{eq-3}\ge 36mn-24m^2+35m+n+1>0$. When $n=m+1$, $\eqref{eq-3}=12mn-24m^2+35m+n+1=-12m^2+48m+2\ne 0$ since the discriminant is 2400 which is not a prefect square.

 By \eqref{eq-2}, we have
\begin{align}& \quad
6[4mn(m-n-2) +4m^2-2mn+3m-n-1] -5(m-n-1)\nonumber\\ & = 24m^2+24m^2n-24mn^2-60mn+13m-n-1\nonumber\\
& = 24mn(m-n-2)+12m(m-n)+12m^2+13m-n-1.\label{eq-4}\end{align}
Clearly $\eqref{eq-4}>0$ for $m\ge n+2$.  When $m\le n$, $\eqref{eq-4}\le -48mn+12m^2+13m-n-1 =12m(m-4n+1)+m-n-1<0$. When $m=n+1$, then $H$ is regular so condition (iii) holds. The proof is complete.   \end{proof}

\begin{example}
Let $V(C_6)=\{u_1, u_3, u_5, u_2, u_4, u_6\}$ and $V(O_8)=\{v_j\;|\; 1\le j\le 8\}$ be the vertex lists of $C_6$ and $O_8$. According to the proof of Theorem~\ref{thm-G[H]} we label the edges of $C_6$ by 1 to 6 in natural order. So the induced vertex labels are 7, 3, 5, 7, 9, 11.
Then

\[\begin{array}{c}7\\5\\9\\\hline 3\\7\\11\end{array}\left(\begin{array}{cccc|cccc}
7 & 17 & 8 & 42 & 14 & 41 & 26 & 29\\
11 & 15 & 10 & 40 & 16 & 39 & 28 & 27\\
9 & 13 & 12 & 38 & 18 & 37 & 30 & 25\\\hline
54 & 48 & 53 & 19 & 47 & 20 & 35 & 32\\
52 & 46 & 51 & 21 & 45 & 22 & 33 & 34\\
50 & 44 & 49 & 23 & 43 & 24 & 31 & 36
\end{array}\right)
\begin{array}{c}191\\191\\191\\\hline 311\\311\\311\end{array}\]
The column in front of the matrix is the corresponding induced vertex labels under $f$on $C_6$, and the column behind of the matrix is the induced vertex labels of the labeling $\phi$ for $C_6\vee O_8$. One may check that the column sum of the matrix is 183, which is $\phi^+(v_j)$ for all $j$. 
\end{example}

 Corollary~\ref{cor-C3K2[CmOn]} shows that there exist infinitely many graphs $H$ such that $\chi_{la}(G[H]) = \chi_{la}(G)\chi_{la}(H) = \chi(G)\chi(H)$. We end this note with the following conjectures.

\begin{conjecture} There exist infinitely many graphs $G$ and $H$ respectively such that  $\chi_{la}(G[H]) = \chi_{la}(G)\chi_{la}(H)  = \chi(G)\chi(H)$.  \end{conjecture}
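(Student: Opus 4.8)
Toward this conjecture the natural starting point is Corollary~\ref{cor-C3K2[CmOn]}, which already supplies one infinite family: with $G=C_3\times K_2$ fixed and $H=C_{2m}\vee O_{2n}$ varying, $\chi_{la}(G[H])=\chi_{la}(G)\chi_{la}(H)=\chi(G)\chi(H)=9$ for all $m\ge 2$, $n\ge 1$. What remains, to get the two-sided ``infinitely many $G$ \emph{and} $H$ respectively'' asserted in the conjecture, is to let $G$ range over an infinite family as well. The key simplifying observation is that for \emph{regular} graphs the hypotheses of Theorem~\ref{thm-G[H]} collapse: if $G$ is $r$-regular then condition (iv) holds trivially and condition (v) reduces to the tautology ``$g^+(u)\ne g^+(v)\Rightarrow g^+(u)\ne g^+(v)$''; if $H$ is $s$-regular then condition (ii) holds trivially and condition (iii) reduces to ``$h^+(u)\ne h^+(v)\Rightarrow h^+(u)\ne h^+(v)$''. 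Hence the only surviving requirement is the parity condition (i) on $H$, which is precisely the content of Corollary~\ref{cor-ULbounds-reg}, and the whole problem reduces to a supply question about graphs.

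Concretely, the plan is to fix two infinite families: (A) regular graphs $G$ that contain a triangle and satisfy $\chi_{la}(G)=3$ (whence $\chi(G)=3$ automatically, since $C_3\subseteq G$ forces $\chi(G)\ge 3$ while $\chi(G)\le\chi_{la}(G)$); and (B) regular \emph{nonbipartite} graphs $H$ with $\chi_{la}(H)=3$ admitting a local antimagic $3$-coloring that is parity-balanced at every vertex --- note this forces every vertex of $H$ to have even degree, so the members of family (B) are $2t$-regular. For any $G$ in family (A) and any $H$ in family (B), Corollary~\ref{cor-ULbounds-reg} (``in particular'' clause, $C_3\subseteq G$) gives
\[
9 \;=\; 3\chi(H)\;\le\;\chi(G[H])\;\le\;\chi_{la}(G[H])\;\le\;\chi_{la}(G)\chi_{la}(H)\;=\;9,
\]
so $\chi_{la}(G[H])=\chi_{la}(G)\chi_{la}(H)=\chi(G)\chi(H)=9$; since both families are infinite, every coordinate ranges over infinitely many graphs and the conjecture follows. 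The octahedral graph $K_{2,2,2}$ of Example~\ref{Ex-2} already lies in family (B) and $C_3\times K_2$ in family (A); the candidates I would test for enlarging these to infinite families are the balanced complete tripartite graphs $K_{n,n,n}=C_3[O_n]$ (which are $2n$-regular, contain triangles, and have $\chi=3$, so they could serve either family once a parity-balanced $3$-coloring is produced), and the $4$-regular graphs $C_3\,\square\,C_m$ for family (A); one keeps whichever family admits the most transparent explicit local antimagic $3$-coloring.

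The real work --- and the expected main obstacle --- is step (A)/(B): exhibiting, for an infinite family, a local antimagic labeling whose induced edge-sums take exactly three values, and, for family (B), simultaneously forcing each vertex to see equally many odd and even incident labels. Both are delicate, the second especially so because the parity constraint ties the labeling rigidly to the edge set. The approach I would follow mirrors the labeling-matrix arguments already used in Theorem~\ref{thm-C2mVO2n} and Example~\ref{Ex-2}: describe the labeling of each graph in the family by a block matrix assembled from translated magic squares or magic rectangles, together with the sequences $S_p^{\pm}$ of Lemma~\ref{lem-S}, arranged so that the row sums automatically fall into three classes and so that (for family (B)) each row contains as many odd as even numerical entries by construction. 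Checking that the three edge-sum classes are genuinely pairwise distinct then becomes a finite congruence-and-discriminant computation of exactly the kind carried out repeatedly in the proof of Theorem~\ref{thm-C2mVO2n} and in Corollary~\ref{cor-C3K2[CmOn]}. Once one such pair of infinite families is in hand, the sandwich displayed above finishes the argument with no further effort.
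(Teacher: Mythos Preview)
The statement you are attempting is a \emph{conjecture} in the paper, not a theorem: the authors do not prove it. Immediately before stating it they observe that Corollary~\ref{cor-C3K2[CmOn]} furnishes infinitely many $H$ (with $G=C_3\times K_2$ held fixed) satisfying $\chi_{la}(G[H])=\chi_{la}(G)\chi_{la}(H)=\chi(G)\chi(H)$, and then explicitly pose the two-sided version --- infinitely many $G$ \emph{and} $H$ --- as an open problem. There is therefore no ``paper's own proof'' to compare your attempt against.

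As a strategy your outline is sound, and the reductions you identify are correct: for regular $G$ and $H$ the hypotheses of Theorem~\ref{thm-G[H]} collapse exactly as you describe, Corollary~\ref{cor-ULbounds-reg} applies, and the sandwich $9=3\chi(H)\le\chi_{la}(G[H])\le\chi_{la}(G)\chi_{la}(H)=9$ closes whenever both factors are regular, contain triangles, have $\chi_{la}=3$, and $H$ carries a parity-balanced local antimagic $3$-coloring. But, as you yourself flag, the proposal is not a proof: the essential step --- actually \emph{producing} an infinite family~(A) of such $G$ and an infinite family~(B) of such $H$ with the required parity-balanced labelings --- is not carried out. You name plausible candidates ($K_{n,n,n}$, $C_3\,\square\,C_m$) and sketch the labeling-matrix machinery you would deploy, but no labeling is constructed and no verification is given. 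Until those constructions are written down and checked (and the parity condition in particular is genuinely restrictive), the conjecture remains open, which is precisely the status the paper assigns it.
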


\begin{conjecture} For $k\ge 1$ and graph $G$ and $H$, $\chi_{la}(G[H]) = \chi(G)\chi(H)$ if and only if $\chi(G)\chi(H) = 2\chi(H) + \lceil\frac{\chi(H)}{k}\rceil$, where $2k+1$ is the length of a shortest odd cycle in $G$. \end{conjecture}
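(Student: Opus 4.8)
The plan is to first turn the right-hand condition into a transparent structural statement about $G$, and then to split the biconditional into its two implications, isolating in each the single place where an explicit antimagic construction is unavoidable.

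\emph{Decoding the right-hand side.} Write $c=\chi(G)$ and $m=\chi(H)$. The presence of a shortest odd cycle of length $2k+1$ means $G$ is nonbipartite, so $c\ge 3$. The condition $\chi(G)\chi(H)=2\chi(H)+\lceil\chi(H)/k\rceil$ reads $(c-2)m=\lceil m/k\rceil$. Since $k\ge 1$ gives $\lceil m/k\rceil\le m$ while $c\ge 3$ gives $(c-2)m\ge m$, the identity squeezes both inequalities to equalities: it forces $c=3$ and, as soon as $m\ge 2$, also $k=1$ (the case $m=1$ being degenerate, where only $c=3$ is needed). I would record this as a lemma: the right-hand condition is equivalent to ``$\chi(G)=3$ and $G$ contains a triangle'' (plus the edgeless-$H$ degeneracy). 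This reduces the conjecture to comparing $\chi_{la}(G[H])$ with $3\chi(H)$.

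\emph{The ``if'' direction.} Assume the right-hand side, so $c=3$, $k=1$, and hence $\chi(G)\chi(H)=3\chi(H)=2\chi(H)+\lceil\chi(H)/k\rceil$. The lower bound is immediate from Corollary~\ref{cor-LB1}:
\[\chi_{la}(G[H])\ge \chi(G[H])\ge 2\chi(H)+\lceil\chi(H)/k\rceil=3\chi(H).\]
For the matching upper bound I would invoke Theorem~\ref{thm-G[H]}, which yields $\chi_{la}(G[H])\le \chi_{la}(G)\chi_{la}(H)$ once $G$ and $H$ carry local antimagic colorings satisfying conditions (i)--(v). Whenever those colorings can be chosen with $\chi_{la}(G)=\chi(G)=3$ and $\chi_{la}(H)=\chi(H)$, the product collapses to $3\chi(H)=\chi(G)\chi(H)$ and closes the gap; Corollary~\ref{cor-C3K2[CmOn]} and Example~\ref{Ex-2} are exactly such instances.

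\emph{The ``only if'' direction.} I would argue by contraposition. For nonbipartite $G$ one always has $2\chi(H)+\lceil\chi(H)/k\rceil\le 3\chi(H)\le \chi(G)\chi(H)$, so if the right-hand condition fails then, by the decoding step, $\chi(G)\chi(H)>2\chi(H)+\lceil\chi(H)/k\rceil$ (the chain above being strict). Since $\chi_{la}(G[H])\ge \chi(G[H])\ge 2\chi(H)+\lceil\chi(H)/k\rceil$ only bounds $\chi_{la}(G[H])$ from below, to contradict $\chi_{la}(G[H])=\chi(G)\chi(H)$ I must exhibit a local antimagic labeling of $G[H]$ inducing strictly fewer than $\chi(G)\chi(H)$ colors. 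This is the crux, and it is exactly where a full proof is missing.

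\emph{Main obstacle.} Both implications reduce to the same missing ingredient: the lower bounds are supplied for free by Corollary~\ref{cor-LB1}, but the upper bounds require explicit local antimagic labelings with a prescribed, small number of induced colors. Theorem~\ref{thm-G[H]} only controls $\chi_{la}(G[H])$ through the product $\chi_{la}(G)\chi_{la}(H)$, so the ``if'' direction further needs every triangle-containing $3$-chromatic $G$ and every $H$ to admit conforming $\chi_{la}=\chi$ colorings obeying (i)--(v), which can fail; and the ``only if'' direction needs, whenever the \v{C}i\v{z}ek--Klav\v{z}ar bound is slack, a labeling of $G[H]$ beating $\chi(G)\chi(H)$. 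I therefore expect an honest proof to isolate a subclass of pairs $(G,H)$ for which a refinement of Theorem~\ref{thm-G[H]} pins down the exact color count, while treating the bipartite-$G$ and $\chi(H)=1$ cases separately at the outset.
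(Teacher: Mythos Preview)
The statement you are attempting to prove is listed in the paper as a \emph{conjecture}, not a theorem; the authors offer no proof whatsoever, only the supporting evidence of Corollary~\ref{cor-C3K2[CmOn]} and the examples preceding it. There is therefore nothing in the paper to compare your argument against.

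Your own write-up is candid about this: you correctly reduce the right-hand condition to ``$\chi(G)=3$ and $G$ contains a triangle'' (for $\chi(H)\ge 2$), and you correctly isolate that both implications hinge on producing explicit local antimagic labelings with a controlled number of colors --- constructions that neither the paper nor you supply in general. So what you have is an honest outline of the obstructions, not a proof, and you say as much.

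More seriously, the ``only if'' direction appears to be \emph{false} as stated, so no proof can exist. Take $G=K_4$ and $H=K_2$. Then $G[H]\cong K_8$, whence $\chi_{la}(G[H])=8=\chi(G)\chi(H)$. Here the shortest odd cycle in $G$ has length $3$, so $k=1$ and $2\chi(H)+\lceil\chi(H)/k\rceil=6\ne 8$; the right-hand condition fails even though the left-hand equality holds. (More generally, $K_m[K_n]\cong K_{mn}$ gives $\chi_{la}(G[H])=mn=\chi(G)\chi(H)$ for every $m\ge 4$, all violating the condition.) Your contrapositive plan for this direction --- exhibit a labeling with fewer than $\chi(G)\chi(H)$ colors whenever the \v{C}i\v{z}ek--Klav\v{z}ar bound is slack --- therefore cannot succeed in general, and the obstacle you flag is not merely a gap in the argument but a genuine failure of the statement.
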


\footnotesize

\end{document}